\newtheorem{theorem}{Theorem}
\newtheorem{lemma}[theorem]{Lemma}
\newtheorem{corollary}[theorem]{Corollary}
\numberwithin{equation}{section}
\numberwithin{Theorem}{section}
\newcommand{\R}{\mathbb{R}}
\newcommand{\Z}{\mathbb{Z}}
\newcommand{\N}{\mathbb{N}}
\newcommand{\supp}{\operatorname{supp}}
\newcommand{\p}{\partial}
\newcommand{\bs}{\backslash}
\newcommand{\ds}{\displaystyle}
\begin{document}

\title{Fourier transform of anisotropic Hardy spaces}
\author{Marcin Bownik}
\author{Li-An Daniel Wang}
\address{Department of Mathematics, University of Oregon, Eugene,
OR 97403--1222, U.S.A.}
\email{mbownik@uoregon.edu, lwang3@uoregon.edu}

\keywords{anisotropic Hardy space, atomic decomposition}

\subjclass[2010]{Primary: 42B30}
\date{\today}

\begin{abstract}
We show that if $f$ is in anisotropic Hardy space $H_A^p$, $0 < p \leq 1$, with respect to a dilation matrix $A$, then its Fourier transform $\hat{f}$ satisfies pointwise estimate
\[
|\hat f(\xi)| \le C ||f||_{H^p_A} \rho_*(\xi)^{\frac{1}{p}-1}.\]
Here, $\rho_*$ is a quasi-norm associated with the transposed matrix $A^*$. This leads to necessary conditions for functions $m$ to be multipliers on $H_A^p$, as well as further pointwise characterizations on $\hat{f}$ and a generalization of the Hardy-Littlewood inequality on the integrability of $\hat{f}$. This last result is strengthened through the use of rearrangement functions.
\end{abstract}

\maketitle

\section{Introduction}

In the real-variable theory of Hardy spaces $H^p$ of Fefferman and Stein \cite{CFeSt-1972}, a well-known problem is the characterization of $\hat{f}$ for $f \in H^p$. Coifman \cite{Co-1974b} characterized all such $\hat{f}$ on $\R$ using entire functions of exponential type. In higher dimensions necessary conditions have been studied by a number of authors \cite{Col-1982, GCK-2001, TaWe-1980}. In particular, Taibleson and Weiss \cite{TaWe-1980} showed that for $p \in (0, 1]$, the Fourier transform of $f \in H^p(\R^n)$ is continuous and satisfies the following estimate:
    \begin{align}
    \label{Iso-F(f)}
    |\hat{f}(\xi)| \leq C \| f \|_{H^p} |\xi|^{n(\frac{1}{p} - 1)}.
    \end{align}
This leads to the following consequences; see \cite[III.7]{GCF-WN}, \cite{GCK-2001} for more details. At the origin, the estimate \eqref{Iso-F(f)} forces $f \in H^p \cap L^1$ to have vanishing moments, as seen by the degree of 0 of $\hat{f}$ at the origin, illustrating the necessity of the vanishing moments of the atoms. Away from the origin, the polynomial growth is sharp, as given by an extension of the Hardy-Littlewood inequality for $f\in H^p$, $0<p\le 1$,
    \begin{align}\label{int2}
    \int_{\R^n} |\xi|^{n(p - 2)} |\hat{f}(\xi)|^p d\xi \leq C \| f \|_{H^p}^p.
    \end{align}
The estimate \eqref{Iso-F(f)} also sheds light on multiplier operators of $H^p$. When paired with the molecular characterization of $H^p$, it shows that the multiplier operator $T_m : H^p \rightarrow H^p$ is bounded provided the multiplier $m$ satisfies the (integral) H\"{o}rmander condition. On the other hand, if $T_m$ is any bounded multiplier operator on $H^p$, then $m$ is necessarily continuous and bounded on $\R^n \bs \{ 0 \}$.

The main purpose of this paper is to extend \eqref{Iso-F(f)} from the isotropic (classical) setting to anisotropic Hardy spaces $H_A^p$ associated with a dilation matrix $A$. In this new setting, the continuous dilation $\varphi_t (x) = t^{-n} \varphi(x/t)$ for $t > 0$ is replaced by the discrete dilation $\varphi_k (x) = |\det A|^k \varphi(A^k x)$ for $k \in \Z$ and the Euclidean norm $|\cdot|$ is generalized by a quasinorm $\rho : \R^n \rightarrow [0, \infty)$ associated with $A$. When $A = 2I_n$, the anisotropic setting coincides with the classical theory since $\rho$ can be chosen as $\rho(x)=|x|^n$. We denote by $\rho_{\ast}$ the quasi-norm associated with the transposed matrix $A^*$. Our main result takes the following form.

\begin{theorem}\label{Thm1} Let $p \in (0, 1]$. If $f \in H_A^p (\R^n)$, then $\hat{f}$ is a continuous function and satisfies
    \begin{align}
    \label{Aniso-F(f)}
    |\hat{f}(\xi)| \leq C \| f \|_{H_A^p} \ \rho_{\ast} (\xi)^{\frac{1}{p} - 1}
    \end{align}
with $C = C(A, p)$.
\end{theorem}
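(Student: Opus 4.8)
The plan is to prove the estimate first for $(p,\infty)$-atoms and then extend to all of $H_A^p$ by the atomic decomposition.

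The plan is to follow the scheme of Taibleson and Weiss: prove \eqref{Aniso-F(f)} first for atoms, with a constant independent of the atom, and then deduce the general case from the atomic decomposition. Recall that every $f \in H_A^p$ can be written as $f = \sum_j \lambda_j a_j$, convergent in $\mathcal S'$, where the $a_j$ are $(p,\infty,s)$-atoms with $s$ at least the threshold $\lfloor(\tfrac1p-1)\tfrac{\ln|\det A|}{\ln\lambda_-}\rfloor$ built into the definition of anisotropic atoms, and $\sum_j|\lambda_j|^p \lesssim \|f\|_{H_A^p}^p$. Since $0<p\le1$, this yields $\sum_j|\lambda_j| \le \big(\sum_j|\lambda_j|^p\big)^{1/p}\lesssim \|f\|_{H_A^p}$. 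Granting the uniform atomic bound $|\hat a(\xi)| \le C\rho_*(\xi)^{1/p-1}$, the series $\sum_j\lambda_j\hat a_j$ converges uniformly on every compact set (there each $|\hat a_j|$ is $\le C\sup_K\rho_*^{1/p-1}$), so its sum is continuous and dominated by $C\|f\|_{H_A^p}\rho_*(\xi)^{1/p-1}$; since $\rho_*^{1/p-1}$ grows at most polynomially this sum is a tempered distribution, and by dominated convergence against Schwartz functions it equals $\hat f$. This gives the theorem.

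For the atomic estimate, let $a$ be a $(p,\infty,s)$-atom supported in $x_0 + A^k\Delta$, where $\Delta$ is the ellipsoid used to build dilated balls, with $\|a\|_\infty \le |\det A|^{-k/p}$ and vanishing moments through order $s$. Writing $b = |\det A|$ and translating the center to the origin (which changes neither $|\hat a|$ nor the size/moment conditions on the modulus), set $a_0(x) = b^{k/p}a(A^kx)$; then $a_0$ is supported in $\Delta$, $\|a_0\|_\infty\le 1 = |\Delta|^{-1/p}$, and $\int x^\alpha a_0(x)\,dx = 0$ for $|\alpha|\le s$, because $(A^{-k}y)^\alpha$ is a polynomial of degree $\le|\alpha|$. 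A change of variables gives $\hat a(\xi) = b^{k(1-1/p)}\hat a_0((A^*)^k\xi)$, and since $\rho_*((A^*)^k\xi) = b^k\rho_*(\xi)$ we may write $b^{k(1-1/p)} = \rho_*((A^*)^k\xi)^{1-1/p}\rho_*(\xi)^{1/p-1}$. Hence, with $\eta = (A^*)^k\xi$,
\[
|\hat a(\xi)| = \rho_*(\xi)^{1/p-1}\,\rho_*(\eta)^{1-1/p}\,|\hat a_0(\eta)|,
\]
so the theorem reduces to the scale-free statement $|\hat a_0(\eta)| \le C\rho_*(\eta)^{1/p-1}$ for every normalized atom $a_0$ and every $\eta$.

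To prove this, split on the size of $\rho_*(\eta)$. If $\rho_*(\eta)\ge1$ then $\rho_*(\eta)^{1/p-1}\ge1$ and the crude bound $|\hat a_0(\eta)|\le\|a_0\|_1\le\|a_0\|_\infty|\Delta|\le1$ finishes the case. If $\rho_*(\eta)<1$ (equivalently $|\eta|$ small), expand $e^{-2\pi iy\cdot\eta}$ in its Taylor polynomial of degree $s$ at $0$; the vanishing moments of $a_0$ annihilate the polynomial part, leaving $|\hat a_0(\eta)| \le \int_\Delta |a_0(y)||R_s(y,\eta)|\,dy \lesssim |\eta|^{s+1}$, with a constant depending only on $s$ and the Euclidean diameter of $\Delta$. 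The standard comparison of $\rho_*$ with $|\cdot|$ near the origin gives $|\eta|\lesssim\rho_*(\eta)^{\ln\lambda_-/\ln b}$ when $\rho_*(\eta)<1$, so $|\hat a_0(\eta)|\lesssim\rho_*(\eta)^{(s+1)\ln\lambda_-/\ln b}$; since $\rho_*(\eta)<1$ and $(s+1)\tfrac{\ln\lambda_-}{\ln b}\ge\tfrac1p-1$ by the choice of $s$, this is $\lesssim\rho_*(\eta)^{1/p-1}$.

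The change-of-variables bookkeeping, the Taylor remainder estimate, and the $\rho_*$–$|\cdot|$ comparison are all routine (the last is a standard lemma of the anisotropic theory), and the case $p=1$ is essentially trivial. I expect the only genuinely delicate point to be the transition from atoms to arbitrary $f$: one must verify that $\sum_j\lambda_j\hat a_j$ has a sum that is simultaneously a bona fide continuous function (via local uniform convergence) and the tempered distribution $\hat f$ (via dominated convergence against test functions, using $\sum_j|\lambda_j|<\infty$ and the polynomial growth of $\rho_*^{1/p-1}$). The second most delicate point is aligning the order $s$ of the vanishing moments with the target exponent $\tfrac1p-1$ through the quasi-norm comparison — which is precisely where the lower bound on $s$ in the definition of anisotropic atoms is used at full strength.
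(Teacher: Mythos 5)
Your proposal is correct and follows essentially the same route as the paper: reduce to a single atom, rescale to the unit ellipsoid (your $a_0$ is exactly the paper's $D_A^k a$ up to normalization), use the trivial $L^1$ bound when $\rho_*\geq 1$ and the Taylor expansion with vanishing moments plus the comparison $|\eta|\lesssim \rho_*(\eta)^{\zeta_-}$ when $\rho_*<1$, and then sum the atomic series using $\ell^p\subset\ell^1$ with locally uniform convergence to get continuity and the identification with $\hat f$ in $\mathcal S'$. The only cosmetic differences are your use of $(p,\infty,s)$-atoms instead of general $(p,q,s)$-atoms and the explicit factorization $b^{k(1-1/p)}=\rho_*(\eta)^{1-1/p}\rho_*(\xi)^{1/p-1}$, neither of which changes the substance.
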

Theorem \ref{Thm1} leads to similar consequences as in the isotropic setting. At the origin, we obtain a sharper order for the convergence of $\hat{f}(\xi)$ as $\xi \rightarrow 0$. This is given by Corollary \ref{Cor-Local}, and shows the necessity of vanishing moments for anisotropic atoms in $ H_A^p$. We then obtain necessary conditions for a function $m$ to be a multiplier on $H_A^p$, given by Corollary \ref{Cor-Mult}. Lastly, we show in Corollary \ref{Cor-Global} that the function $|\hat{f}(\xi)|^p \rho_{\ast} (\xi)^{p - 2}$ is integrable, which is a generalization of Hardy-Littlewood's inequality \eqref{int2}. In Theorem \ref{Lem-Rearr}, we further improve this estimate using rearrangement functions as in the work of Garc\'ia-Cuerva and Kolyada \cite{GCK-2001}, though we use a slightly different argument.

The anisotropic structure considered here was motivated by wavelet theory, and is certainly not the first generalization of the underlying $\R^n$ structure.  Calder\'{o}n and Torchinsky \cite{CaTo1-1977, CaTo2-1977} studied the parabolic setting of using dilations of continuous groups $\{ A_t \}_{t > 0}$ on $\R^n$. Folland and Stein \cite{FoSt-1971} replaced the underlying $\R^n$ with homogeneous groups, and Coifman and Weiss initiated the study of Hardy spaces on spaces of homogeneous type in their seminal work \cite{CW-1977}. However, the extension of \eqref{Iso-F(f)} was not considered in the parabolic setting, and the Fourier transform takes a more abstract form on homogeneous groups. Moreover, the Fourier transform is not even considered on spaces of homogeneous type, as these spaces might not have an underlying group structure.

In the next section, we briefly give the background on anisotropic Hardy spaces. In Section 3, we prove Theorem \ref{Thm1}. The consequences of this theorem are in Section 4.

\section{Anisotropic Setting} We now introduce the anisotropic structure and the associated Hardy spaces. For more details see Bownik \cite{Bo-2003}.

Let $A$ be an $n \times n$ matrix, and $|\det A| = b$. We say $A$ is a dilation matrix if all eigenvalues $\lambda$ of $A$ satisfy $|\lambda| > 1$. If $\lambda_1, \ldots, \lambda_n$ are the eigenvalues of $A$, ordered by their norm from smallest to largest, then define $\lambda_-$ and $\lambda_+$ to satisfy $1 < \lambda_- < |\lambda_1|$ and $|\lambda_n| < \lambda_+$. Given a dilation matrix $A$, we can find a (non-unique) homogeneous quasi-norm, that is, a measurable mapping $\rho_A : \R^n \rightarrow [0, \infty)$ with a doubling constant $c$ satisfying:
        \begin{center}
        \begin{tabular}{lll}
        $\rho_A (x) = 0$
            &   exactly when    &   $x = 0$, \\
        $\rho_A (Ax) = b\rho(x)$
            &   for all         &   $x \in \R^n$, \\
        $\rho_A(x + y) \leq c(\rho_A(x) + \rho_A(y))$  &  for all & $x, y \in \R^n$. \\
        \end{tabular}
        \end{center}
Note that $(\R^n, dx, \rho_A)$ is a space of homogeneous type ($dx$ denotes the Lebesgue measure), and any two quasi-norms associated with $A$ will give the same anisotropic structure. This anisotropic quasi-norm is related to the Euclidean structure by the following lemma of Lemarie-Rieusset \cite{Le-1994}.

\begin{lemma} Suppose $\rho_A$ is a homogeneous quasi-norm associated with dilation $A$. Then there is a constant $c_A$ such that:
    \begin{align}
    \label{EQ1}
    \frac{1}{c_A} \rho_A(x)^{\zeta_-} \leq |x| \leq c_A \rho_A(x)^{\zeta_+}     \ &\textrm{ if } \ \rho_A(x) \geq 1, \\
    \label{EQ2}
    \frac{1}{c_A} \rho_A(x)^{\zeta_+} \leq |x| \leq c_A \rho_A(x)^{\zeta_-}     \ &\textrm{ if } \ \rho_A(x) < 1,
    \end{align}
where $c_A$ depends only on the eccentricities of $A$:  $\displaystyle \zeta_{\pm} = \frac{\ln \lambda_{\pm}}{\ln b}$.
\end{lemma}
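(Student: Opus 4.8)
The plan is to reduce any point to a fixed annulus via the step homogeneity $\rho_A(A^kx)=b^k\rho_A(x)$ and then control $|A^kx|$ on that annulus using the growth estimates for powers of $A$ that come from its Jordan normal form. For the latter, note first that the eigenvalue moduli of $A$ satisfy $1<\lambda_-<|\lambda_1|\le\dots\le|\lambda_n|<\lambda_+$, so the Jordan decompositions of $A$ and of $A^{-1}$ (equivalently, Gelfand's spectral radius formula applied to each) furnish a constant $c\ge 1$, depending only on $A$, with
\[
c^{-1}\lambda_-^k|x|\le|A^kx|\le c\,\lambda_+^k|x|\quad(k\ge 0),\qquad c^{-1}\lambda_+^k|x|\le|A^kx|\le c\,\lambda_-^k|x|\quad(k\le 0),
\]
for every $x\in\R^n$.

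Next I would establish the ``annulus lemma'': there exist $0<m_1\le m_2<\infty$ such that $1\le\rho_A(y)<b$ implies $m_1\le|y|\le m_2$. Since any two homogeneous quasi-norms associated with $A$ are mutually comparable --- which changes only the eventual constant $c_A$ and not the exponents $\zeta_\pm$ --- it is enough to check this for the explicit step quasi-norm built from an $A$-adapted ellipsoid, for which the set $\{y:1\le\rho_A(y)<b\}$ is by construction a bounded subset of $\R^n\setminus\{0\}$, hence trapped between two Euclidean spheres.

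To assemble the estimate, fix $x\ne 0$, set $k=\lfloor\log_b\rho_A(x)\rfloor$ and $y=A^{-k}x$, so that $\rho_A(y)=b^{-k}\rho_A(x)\in[1,b)$ --- whence $m_1\le|y|\le m_2$ --- while $b^k\le\rho_A(x)<b^{k+1}$ and $|x|=|A^ky|$. If $\rho_A(x)\ge 1$ then $k\ge 0$, and the first displayed estimate gives $c^{-1}m_1\lambda_-^k\le|x|\le c\,m_2\lambda_+^k$; writing $\lambda_\pm^k=(b^k)^{\zeta_\pm}$ and using $\rho_A(x)/b<b^k\le\rho_A(x)$ together with $0<\zeta_-\le\zeta_+$ (both positive because $\lambda_\pm>1$ and $b>1$) turns this into \eqref{EQ1}. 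If $\rho_A(x)<1$ then $k\le 0$, the second displayed estimate gives $c^{-1}m_1\lambda_+^k\le|x|\le c\,m_2\lambda_-^k$, and the identical substitution --- where the monotonicity of $u\mapsto u^{\zeta_\pm}$ and the inequality $\zeta_-\le\zeta_+$ now make the two exponents exchange roles --- yields \eqref{EQ2}. Taking $c_A$ to be the largest of the finitely many constants that occur completes the proof. The only step that is not purely computational is the annulus lemma for an arbitrary homogeneous quasi-norm, which is disposed of by the comparability of quasi-norms; everything else is careful bookkeeping of the two regimes $\rho_A(x)\ge 1$ and $\rho_A(x)<1$ and of the directions of the inequalities.
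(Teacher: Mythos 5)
Your proof is correct. Note that the paper itself gives no proof of this lemma --- it is quoted from Lemari\'e-Rieusset (and appears with proof in Bownik's memoir) --- so there is nothing in the source to compare against line by line; your argument is in fact the standard one. The two ingredients you use are exactly the right ones: the norm growth bounds $\|A^k\|\le c\lambda_+^k$ and $\|A^{-k}\|\le c\lambda_-^{-k}$ for $k\ge 0$ (from the spectral radius formula, since $\lambda_\pm$ are chosen strictly outside the moduli of the eigenvalues), and the reduction of an arbitrary $x\ne 0$ to the annulus $\{1\le\rho_A<b\}$ via $k=\lfloor\log_b\rho_A(x)\rfloor$ and the homogeneity $\rho_A(A^{-k}x)=b^{-k}\rho_A(x)$. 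I checked the bookkeeping in both regimes: from $\rho_A(x)/b<b^k\le\rho_A(x)$ and $\lambda_\pm^k=(b^k)^{\zeta_\pm}$ one indeed gets \eqref{EQ1} for $k\ge0$ and \eqref{EQ2} for $k\le0$, with the roles of $\zeta_-$ and $\zeta_+$ exchanged because the upper/lower operator-norm bounds swap for negative powers. The one step you correctly flag as non-computational --- that $\{y:1\le\rho_A(y)<b\}$ is trapped between two spheres for an arbitrary, merely measurable, homogeneous quasi-norm --- does require the mutual equivalence of all quasi-norms associated with $A$; that equivalence is itself a small covering argument, but since the paper explicitly asserts it (``any two quasi-norms associated with $A$ will give the same anisotropic structure''), invoking it and verifying the annulus bound only for the canonical step quasi-norm \eqref{CanNorm}, where $\{1\le\rho_A<b\}=B_1\setminus B_0$ is manifestly bounded and bounded away from $0$, is legitimate. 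The only cosmetic discrepancy is that your constant $c_A$ depends on $A$ and the chosen quasi-norm, not literally ``only on the eccentricities'' as the statement loosely says; that is a defect of the statement, not of your proof.
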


In the isotropic setting, the `basic' geometric object is the Euclidean ball $B(x, r)$, centered at $x \in \R^n$ with radius $r$. Conveniently, whenever $r_1 < r_2$, we have $B(x, r_1) \subset B(x, r_2)$. But for a dilation matrix $A$, we do not expect $B(x, r) \subset A(B(x, r))$. Instead, one can construct ellipsoids $\{ B_k \}_{k \in \Z}$, associated with $A$, such that for all $k$, $B_{k + 1} = A(B_k)$, $B_k \subseteq B_{k + 1}$,  and $|B_k| = b^k$. These nested ellipsoids will serve as the basic geometric object in the anisotropic setting. Moreover, we can use the ellipsoids to define the canonical quasinorm associated with $A$ as follows:
    \begin{align}
    \label{CanNorm}
    \rho_A(x) =
            \begin{cases}
            b^j     &\textrm{ if } x \in B_{j + 1} \backslash B_j \\
            0       &\textrm{ if } x = 0.
            \end{cases}
    \end{align}
Once $A$ is fixed, we will drop the subscript and $\rho$ will always denote the canonical norm. If $A^*$ is the adjoint of $A$, then $A^*$ is also a dilation matrix with the same determinant and eccentricities $\zeta_{\pm}$, but with its own nested ellipsoids $\{ B_k^{\ast} \}_{k \in \Z}$ and (canonical) norm $\rho_{\ast}$.

If $k \in \Z$ and $\varphi$ is in the Schwartz class $\mathcal{S}$, with $\int \varphi \ dx \neq 0$, we denote its anisotropic dilation by $\varphi_k (x) = b^{k} \varphi(A^{k} x)$. Then the radial maximal function on $f \in \mathcal{S}'$ is given by
    \[ M_{\varphi}^0 f(x) = \sup_{k \in \Z} |f \ast \varphi_k (x)|. \]
The anisotropic Hardy space $H_A^p$ consists of all tempered distributions $f \in \mathcal{S}'$ so that $M_{\varphi} f \in L^p$. Analogous to the isotropic setting, this definition is independent of the choice of $\varphi$ and is equivalent to the grand maximal function formulation.

In particular, we have the atomic decomposition of $H_A^p$, which greatly simplifies the analysis of Hardy spaces. For a fixed dilation $A$, we say $(p, q, s)$ is an admissible triplet (with respect to $A$) if $p \in (0, 1]$, $1 \leq q \leq \infty, p < q$, and $s \in \N$ satisfying $s \geq \left\lfloor \left( \frac{1}{p} - 1 \right) \frac{1}{\zeta_-} \right\rfloor$. Then a $(p, q, s)$ atom is a function $a(x)$ supported on $x_0 + B_k$ for some $x_0 \in \R^n, k \in \Z$, satisfying
    \begin{center}
    \begin{tabular}{cll}
    (size)       &$\| a \|_q \leq |B_j|^{\frac{1}{q} - \frac{1}{p}}$                & \\
    (vanishing moments)
                &$\ds \int_{\R^n} a(x) x^{\alpha} dx = 0$
                & for \text{all multi-indeces} $|\alpha| \leq s$.
    \end{tabular}
    \end{center}

The standard strategy is to prove a uniform estimate on atoms, and extend it to all $f \in H_A^p$. We will use this strategy for all of our results, possible due to the following atomic characterization, see \cite[Theorem 6.5]{Bo-2003}:
    \begin{theorem} Suppose $p \in (0, 1]$ and $(p, q, s)$ is admissible. Then $f \in H_A^p$ if and only if
        \[ f = \sum_{i} \lambda_i a_i, \]
    for some sequence $( \lambda_i )_i \in \ell^p$ and $( a_i )$ a sequence of $(p, q, s)$ atoms. Moreover,
        \[ \| f \|_{H_A^p} \simeq \inf \{ \| (\lambda_i) \|_{\ell^p}: f = \sum_{i} \lambda_i a_i \}, \]
    where the infimum is taken over all possible atomic decompositions.
    \end{theorem}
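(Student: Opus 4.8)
\emph{Overview and the easy direction.} The statement is the anisotropic analogue of the classical atomic characterization, and I would prove its two directions separately, using throughout that $(\R^n,dx,\rho)$ is a space of homogeneous type and that \eqref{EQ1}--\eqref{EQ2} let one pass between Euclidean (Schwartz) decay and $\rho$-decay; throughout I would measure $\|\cdot\|_{H_A^p}$ by $\|M_N\cdot\|_{L^p}$ for the grand maximal function $M_N$. For the direction ``$\Leftarrow$'' the core is a uniform estimate: $\|a\|_{H_A^p}\le C=C(A,p,q,s)$ for every $(p,q,s)$ atom $a$. By translation invariance of the norm take $\supp a\subset B_j$, fix a dilation exponent $\sigma$ (depending only on the quasi-triangle constant of $\rho$), and split $\R^n=B_{j+\sigma}\cup B_{j+\sigma}^c$. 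On $B_{j+\sigma}$, Hölder's inequality, the $L^q$-boundedness of $M_N$ (for $q>1$; the $L^\infty$ bound if $q=\infty$), and the size condition give $\int_{B_{j+\sigma}}(M_Na)^p\,dx\le|B_{j+\sigma}|^{1-p/q}\|M_Na\|_q^p\le C|B_j|^{1-p/q}\|a\|_q^p\le C$. On $B_{j+\sigma}^c$ one uses the vanishing moments: for the finitely many scales $k$ for which $a\ast\varphi_k(x)\neq0$, write $a\ast\varphi_k(x)=\int a(y)\big(\varphi_k(x-y)-T^s_{x,k}(y)\big)\,dy$ with $T^s_{x,k}$ the degree-$s$ Taylor polynomial of $y\mapsto\varphi_k(x-y)$ at $0$, bound the remainder by derivatives of $\varphi$, and convert that into decay in $\rho(x)$ via \eqref{EQ1}--\eqref{EQ2}; the resulting geometric series converges exactly because $s\ge\lfloor(\tfrac1p-1)\tfrac1{\zeta_-}\rfloor$, so $\int_{B_{j+\sigma}^c}(M_Na)^p\,dx\le C$. (The value $q=1$, allowed only when $p<1$, is recovered afterwards: a $(p,\infty,s)$ atom is a $(p,q,s)$ atom for every $q$, so once ``$\Rightarrow$'' is proved producing $(p,\infty,s)$ atoms, the atomic spaces for all admissible $q$ coincide.) Given the uniform bound, $p$-subadditivity of $f\mapsto\|M_Nf\|_{L^p}^p$ (sublinearity of $M_N$ and $p\le1$) gives $\|\sum_i\lambda_ia_i\|_{H_A^p}^p\le\sum_i|\lambda_i|^p\|a_i\|_{H_A^p}^p\le C\|(\lambda_i)\|_{\ell^p}^p$, and since the partial sums are Cauchy in $H_A^p\hookrightarrow\mathcal S'$ the series converges in $H_A^p$ with the stated bound.

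\emph{The hard direction.} For ``$\Rightarrow$'' I would carry out the anisotropic Calder\'on--Zygmund decomposition of a distribution. Put $\Omega_k=\{x:M_Nf(x)>2^k\}$: these are open, $\sum_k2^{kp}|\Omega_k|\simeq\|M_Nf\|_{L^p}^p=\|f\|_{H_A^p}^p$, and $\Omega_k$ decreases to a null set as $k\to\infty$. Take for each $k$ a Whitney-type covering of $\Omega_k$ by ellipsoids $x_{k,i}+B_{\ell_{k,i}}$ of bounded overlap whose sizes are comparable to their $\rho$-distance to $\Omega_k^c$, with a smooth partition of unity $\{\theta_{k,i}\}$ adapted to a fixed dilate of this cover, and set $b_{k,i}=(f-c_{k,i})\theta_{k,i}$ where $c_{k,i}$ is the polynomial of degree $\le s$ chosen so that $\int b_{k,i}(x)x^\alpha\,dx=0$ for all $|\alpha|\le s$ (this projection onto polynomials makes sense although $f$ is only a distribution, as it only involves pairing $f$ with the Schwartz functions $\theta_{k,i}x^\alpha$). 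Using $M_Nf\le2^k$ on $\Omega_k^c$ one shows $\|b_{k,i}\|_\infty\le C2^k$, that $\supp b_{k,i}$ lies in a fixed dilate of the Whitney ellipsoid, and that $b_{k,i}$ has vanishing moments through order $s$; hence after normalization $b_{k,i}=\lambda_{k,i}a_{k,i}$ with $a_{k,i}$ a $(p,\infty,s)$ atom and $|\lambda_{k,i}|\le C2^k|B_{\ell_{k,i}}|^{1/p}$. With $g_k=f-\sum_ib_{k,i}$, size estimates ($\|g_k\|_\infty\le C2^k$-type bounds, valid because $M_Nf\le2^k$ off $\Omega_k$ and the corrections are controlled) yield $g_k\to f$ in $\mathcal S'$ as $k\to\infty$ and $g_k\to0$ as $k\to-\infty$, so $f=\sum_k(g_{k+1}-g_k)$ in $\mathcal S'$. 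Finally one rewrites each $g_{k+1}-g_k$ as a bounded-overlap sum of constant multiples of $(p,q,s)$ atoms supported on dilates of the Whitney ellipsoids at levels $k,k+1$, absorbing the correction terms $\sum_jc_{k+1,j}\theta_{k,i}$ (which have the right size and split into atoms using that $\sum_j\theta_{k+1,j}=\mathbf 1_{\Omega_{k+1}}$). Collecting everything, $f=\sum_{k,i}\lambda_{k,i}a_{k,i}$ with $\sum_{k,i}|\lambda_{k,i}|^p\le C\sum_k2^{kp}|\Omega_k|\le C\|f\|_{H_A^p}^p$; re-indexing as a single sequence gives the decomposition and the reverse norm bound, and the two directions together yield $\|f\|_{H_A^p}\simeq\inf\|(\lambda_i)\|_{\ell^p}$.

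\emph{Main obstacle.} The second direction is the real work. The delicate points are: defining $c_{k,i}$ and $b_{k,i}$ for a distribution $f$ and extracting the pointwise bound $\|b_{k,i}\|_\infty\le C2^k$ from the grand-maximal control $M_Nf\le2^k$ on $\Omega_k^c$; the $\mathcal S'$-convergences $g_k\to f$ and $g_k\to 0$, which need uniform size/decay estimates for $g_k$ away from $\Omega_k$; and the combinatorial bookkeeping exhibiting $g_{k+1}-g_k$ as a finite-overlap sum of atoms with the right supports, sizes, and moments. The anisotropic features --- ellipsoids $B_\ell$ instead of dyadic cubes, the Whitney and partition-of-unity machinery adapted to $A$, and the systematic use of \eqref{EQ1}--\eqref{EQ2} --- complicate the geometry but not the overall architecture; the direction ``$\Leftarrow$'' is routine once the uniform atomic estimate is in hand.
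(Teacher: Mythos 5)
The paper does not prove this statement at all: it is quoted verbatim from Bownik's memoir (\cite{Bo-2003}, Theorem 6.5) and used as a black box, so there is no in-paper proof to compare against. Your outline reproduces the architecture of the proof given in that reference, which in turn follows the classical Fefferman--Stein/Folland--Stein scheme: the ``$\Leftarrow$'' direction via a uniform grand-maximal-function estimate on single atoms (local part by H\"older and $L^q$-boundedness, far part by the moment condition and Taylor expansion, with the admissibility condition $s\ge\lfloor(\tfrac1p-1)\tfrac1{\zeta_-}\rfloor$ making the tail series summable) followed by $p$-subadditivity; the ``$\Rightarrow$'' direction via a Calder\'on--Zygmund decomposition of distributions on the level sets of the grand maximal function, with Whitney covers by dilated ellipsoids, adapted partitions of unity, polynomial moment corrections, and the telescoping $f=\sum_k(g_{k+1}-g_k)$. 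That is the right skeleton, and your list of delicate points (defining $b_{k,i}$ for a distribution, the $\mathcal S'$-limits of $g_k$, the bookkeeping for $g_{k+1}-g_k$) is accurate.

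Two caveats. First, your parenthetical ``for the finitely many scales $k$ for which $a\ast\varphi_k(x)\neq0$'' is not correct: even for compactly supported $\varphi$ one gets all scales $k$ below a threshold depending on $\rho(x)$, and for Schwartz $\varphi$ no scale vanishes identically; the correct statement is that the sum over $k\in\Z$ is controlled by a convergent geometric series, which you do invoke afterwards, so this is a slip of phrasing rather than of substance. Second, and more importantly, the ``$\Rightarrow$'' direction as written is an outline of an argument that occupies several chapters of \cite{Bo-2003} (the Whitney geometry for ellipsoids, the estimates $M_N g_k\le C2^k+\dots$ off $\Omega_k$, the identification of $g_{k+1}-g_k$ as a bounded-overlap sum of atoms); your sketch names all the ingredients but does not supply them, so it should be read as a correct roadmap to the cited proof rather than a self-contained one.
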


\begin{comment}
Lastly, the notion of an atom can be generalized to a molecule (see \cite{Bo-2003} chapter 9 remark): A function $f$ is a molecule localized around $x_0 + B_k$ if it satisfies the above vanishing moment condition, and
    \begin{align*}
    &\left( \frac{1}{|B_k|} \int_{x_0 + B_k} |f(x)|^q dx \right)^{1/q}
        \leq C |B_k|^{-1/p}, \\
    &|f(x)|
        \leq C |B_k|^{-1/p} \rho(A^{-k} (x - x_0))^{-\delta} \textrm{ for } x \in x_0 + B_k^c.
    \end{align*}
All such molecules are in $H_A^p$. In particular, $f$ does not have to be compactly supported, and any Schwartz function satisfying the vanishing moments is immediately such a molecule.
\end{comment}

\section{Proof of Theorem \ref{Thm1}}
To prepare for the following two lemmas, we recall two basic facts. Define the dilation operator by $D_A (f) (x) = f(Ax)$ commutes with the Fourier transform by the following identity for all $j \in \Z$:
        \begin{align}\label{comm}
        b^j (D_{A^*}^j \mathcal{F} D_A^j f)(\xi) = \hat{f}(\xi).
        \end{align}
Second, the eccentricities of $A^*$ are the same as $A$, that is, \eqref{EQ1} and \eqref{EQ2} hold with the same constants $c_A, \zeta_+, \zeta_-$. Indeed, $A^*$ has the same eigenvalues as $A$.

\begin{lemma} Let $a$ be a $(p, q, s)$ atom supported on $x_0 + B_k$, $k\in \Z$. Suppose $\alpha$ is a multi-index, with $|\alpha| \leq s$. There exists a constant $C = C(s)$ such that
    \begin{align}
    \label{PW-F(a)1}
    |\p^{\alpha} (\mathcal{F}D_A^k a)(\xi)| \leq C b^{-\frac{k}{q}} \| a \|_q \min\{ 1, |\xi|^{s - |\alpha| + 1} \}.
    \end{align}

\end{lemma}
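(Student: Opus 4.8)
The plan is to reduce everything to the normalized atom $\tilde a = D_A^k a$, which is supported on $A^k x_0 + B_0$ (a fixed ellipsoid independent of $k$) and satisfies the rescaled size and moment conditions, then split into the two regimes $|\xi| \le 1$ and $|\xi| > 1$. First I would compute the effect of the dilation: since $a$ is supported on $x_0 + B_k$, the function $\tilde a(x) = a(A^k x)$ is supported on $A^{-k}(x_0 + B_k - x_0) + A^{-k}x_0 = A^{-k}x_0 + B_0$, and a change of variables gives $\|\tilde a\|_q = b^{-k/q}\|a\|_q$ together with $\int \tilde a(x) x^\alpha\,dx = 0$ for $|\alpha| \le s$ (the moment-killing property is dilation-invariant). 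So the claimed bound is exactly $|\partial^\alpha \hat{\tilde a}(\xi)| \le C\|\tilde a\|_q \min\{1, |\xi|^{s-|\alpha|+1}\}$, and all reference to $k$ and $A$ has disappeared except that $\tilde a$ lives on a translate of the fixed ellipsoid $B_0$.

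Next, for the regime $|\xi| > 1$ I would use the elementary bound $|\partial^\alpha \hat{\tilde a}(\xi)| = |\mathcal{F}(x^\alpha \tilde a)(\xi)| \le \|x^\alpha \tilde a\|_1 \le \|x^\alpha\|_{L^{q'}(\text{supp}\,\tilde a)}\|\tilde a\|_q$ by H\"older, and since $\text{supp}\,\tilde a$ sits in a translate of $B_0$ whose diameter and distance to the origin are controlled (uniformly in $k$ via the covering lemmas for $B_0$), the $L^{q'}$ norm of $x^\alpha$ over that set is a constant $C(s)$ — wait, this is not uniform because $x_0$ is arbitrary, so I must instead translate first: write $\tilde a(x) = \tilde a_0(x - A^{-k}x_0)$ with $\tilde a_0$ supported on $B_0$; then $\hat{\tilde a}(\xi) = e^{-2\pi i \langle A^{-k}x_0,\xi\rangle}\hat{\tilde a_0}(\xi)$, and $\partial^\alpha$ of this product expands by Leibniz into terms each bounded by (a polynomial in $A^{-k}x_0$ times) $|\partial^\beta \hat{\tilde a_0}(\xi)|$. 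The cleanest route is therefore to observe that the quantity $\partial^\alpha \hat{\tilde a}$ is, up to the unimodular translation factor and lower-order terms coming from moments, essentially $\mathcal{F}(y^\alpha \tilde a_0)(\xi)$ evaluated after recentering, and then bound $\|y^\alpha \tilde a_0\|_1 \le |B_0|^{1/q'}\sup_{y\in B_0}|y|^{|\alpha|}\,\|\tilde a_0\|_q \le C(s)\|\tilde a\|_q$, giving the constant bound valid for all $\xi$, in particular for $|\xi|>1$.

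For the regime $|\xi| \le 1$ I would exploit the vanishing moments: since $\hat{\tilde a_0}$ is smooth and $\partial^\beta \hat{\tilde a_0}(0) = c_\beta\int y^\beta \tilde a_0(y)\,dy = 0$ for all $|\beta| \le s$, Taylor's theorem centered at $0$ gives $|\partial^\alpha \hat{\tilde a_0}(\xi)| \le C|\xi|^{s-|\alpha|+1}\sup_{|\beta|=s+1}\|\partial^\beta\hat{\tilde a_0}\|_{L^\infty(|\xi|\le 1)}$, and each such derivative is $|\mathcal{F}(y^\beta\tilde a_0)(\xi)| \le \|y^\beta\tilde a_0\|_1 \le |B_0|^{1/q'}\sup_{B_0}|y|^{s+1}\|\tilde a_0\|_q \le C(s)\|\tilde a\|_q$. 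Recombining with the translation factor via Leibniz (each derivative landing on $e^{-2\pi i\langle A^{-k}x_0,\xi\rangle}$ costs a factor $|A^{-k}x_0|$ but is compensated because then fewer derivatives — still at least one more than before the moment count is exhausted — hit $\hat{\tilde a_0}$)... the bookkeeping here is the one genuinely delicate point. The honest fix, and the one I would write up, is to note that the translation only multiplies $\hat{\tilde a}$ by a unimodular factor, so it suffices to prove the estimate for derivatives of $\hat{\tilde a}$ expressed through $\mathcal F((\,\cdot\,)^\alpha \tilde a)$ directly on $\tilde a$ (not $\tilde a_0$), using that $\sup_{x\in \text{supp}\,\tilde a}|x|$ and the Taylor expansion at $\xi = 0$ only require $\tilde a \in L^1$ with compact support and vanishing moments — and the support, being a translate of $B_0$ by a vector that the translation factor absorbs, contributes only its \emph{diameter} $\operatorname{diam}(B_0) = C(A)$, not its location. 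I expect this recentering argument — making precise that nothing depends on $x_0$ or $k$ — to be the main obstacle; once it is set up, both regimes follow from H\"older plus Taylor as above, and taking the minimum of the two bounds yields \eqref{PW-F(a)1}.
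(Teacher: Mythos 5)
Your centered-case argument is essentially the paper's proof: reduce to $\tilde a = D_A^k a$ supported in (a translate of) $B_0$ with $\|\tilde a\|_q = b^{-k/q}\|a\|_q$, obtain the flat bound from H\"older on $B_0$, and obtain the bound $|\xi|^{s-|\alpha|+1}$ from the vanishing moments via a Taylor expansion to order $s-|\alpha|$. (The paper Taylor-expands the exponential $x\mapsto e^{-2\pi i\langle x,\xi\rangle}$ in the physical variable and uses that $x^\alpha T(x)$ is a polynomial of degree $\le s$, whereas you Taylor-expand $\hat{\tilde a_0}$ at $\xi=0$; these are interchangeable.) So for $x_0=0$ your proposal is complete and correct.

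The point you flag as delicate --- the dependence on $x_0$ --- is where your write-up goes wrong, and it cannot be repaired in the form you suggest. The claim that the support of $\tilde a$ ``contributes only its diameter, not its location'' is false for $|\alpha|\ge 1$: the flat bound is $|\partial^\alpha(\mathcal F\tilde a)(\xi)|\le(2\pi)^{|\alpha|}\int|x|^{|\alpha|}|\tilde a(x)|\,dx$, and $|x|^{|\alpha|}$ on $c+B_0$ with $c=A^{-k}x_0$ is of size $|c|^{|\alpha|}$. Concretely, in one dimension with $k=0$, $s=1$, $\alpha=1$: writing $\hat a(\xi)=e^{-2\pi ix_0\xi}\hat{a_0}(\xi)$ gives $\partial_\xi\hat a(\xi)=e^{-2\pi ix_0\xi}\bigl(-2\pi ix_0\hat{a_0}(\xi)+\hat{a_0}'(\xi)\bigr)$, which at $|\xi|=1$ grows like $|x_0|\,|\hat{a_0}(1)|$, so no constant $C(s)$ works uniformly in $x_0$. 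The lemma should therefore be read, as the paper's proof implicitly does with its ``without loss of generality,'' for atoms centered at the origin; this loses nothing downstream because the lemma is only ever invoked with $\alpha=0$, and for $\alpha=0$ a translation of the atom multiplies $\hat a$ by a unimodular factor and leaves $|\hat a|$ unchanged. If you insist on the general-$\alpha$ statement for $x_0\ne 0$, you must either let $C$ depend on $A^{-k}x_0$ or state the bound for the recentered atom.
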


    \begin{proof} Without loss of generality, we can assume $a$ is supported on $B_k$, so  $\supp(D_A^k a) \subset B_0$. Fixing a multi-index $|\alpha| \leq s$, we have
    \begin{align*}
    |\p^{\alpha} (\mathcal{F}D_A^k a)(\xi)|
        &= \bigg|\int_{B_0} (-2\pi i x)^{\alpha} (D_A^k a)(x) e^{-2\pi i \langle x, \xi \rangle} dx \bigg|.
    \end{align*}
    Let $T(x)$ be the degree $s - |\alpha|$ Taylor polynomial of the function $x \mapsto e^{-2\pi i \langle x, \xi \rangle}$ centered at the origin. Using the vanishing moments of an atom, we have
 \begin{align*}
&  |\p^{\alpha} (\mathcal{F}D_A^k a)(\xi)|
        = \bigg|\int_{B_0} (-2\pi i x)^{\alpha} (D_A^k a)(x) e^{-2\pi i \langle x, \xi \rangle} dx \bigg| \\
        &= \bigg|\int_{B_0} (-2\pi i x)^{\alpha} (D_A^k a)(x) \left[ e^{-2\pi i \langle x, \xi \rangle} - T(x) \right] dx \bigg|
  \leq C \int_{B_0} |x^{\alpha}| |a(A^k x)| |x|^{s - |\alpha| + 1} |\xi|^{s - |\alpha| + 1} dx
        \\
&       \leq C |\xi|^{s - |\alpha| + 1} \int_{B_0} |x|^{s + 1} |a(A^k x)| dx
        \leq C |\xi|^{s - |\alpha| + 1} \int_{B_k} |a(y)| \frac{dy}{b^k} \leq C |\xi|^{s - |\alpha| + 1} b^{-k/q}  \| a \|_{q}.
    \end{align*}
The third line is a consequence of Taylor's remainder formula. To obtain the other estimate, we estimate without the Taylor approximation
    \begin{align*}
    |\p^{\alpha} (\mathcal{F} D_A^k a)(\xi)|
        &=      \bigg|\int (-2\pi i x)^{\alpha} (D_A^k a)(x) e^{-2\pi i \langle x, \xi \rangle} dx \bigg|
        \leq C \int_{B_0}  |x|^{|\alpha|} |a(A^k x)| dx\\
        &\leq   C \ b^{-k} \int_{B_k} |a(y )| dy
        \leq C b^{-k/q} \| a \|_q.
    \end{align*}
    \end{proof}

\begin{lemma} Let $a$ be a $(p, q, s)$ atom supported on $x_0 + B_k$ for some $x_0 \in \R^n$ and $k \in \Z$. Then we have the following bound, with $C$ independent of $a$,
    \begin{align}
    \label{PW-F(a)}
    |\hat{a}(\xi)| \leq C \rho_{\ast}(\xi)^{\frac{1}{p} - 1}.
    \end{align}
\end{lemma}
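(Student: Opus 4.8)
The plan is to deduce the pointwise bound \eqref{PW-F(a)} from the previous lemma by the scaling identity \eqref{comm} together with the Lemarie--Rieusset estimates \eqref{EQ1}--\eqref{EQ2}. First I would dispose of the translation: since $\widehat{a(\cdot - x_0)}(\xi) = e^{-2\pi i \langle x_0, \xi\rangle}\,\hat a(\xi)$, the modulus $|\hat a(\xi)|$ is unchanged, so we may assume $a$ is supported on $B_k$. By \eqref{comm} with $\alpha = 0$ we have $\hat a(\xi) = b^{k}\,(\mathcal{F}D_A^k a)((A^*)^k \xi)$, so setting $\eta = (A^*)^k\xi$ and invoking the previous lemma (with $\alpha = 0$) gives
\[
|\hat a(\xi)| \le C\, b^{k}\, b^{-k/q}\,\|a\|_q\,\min\{1, |\eta|^{s+1}\} \le C\, b^{k(1 - 1/q)}\, |B_k|^{1/q - 1/p}\,\min\{1,|\eta|^{s+1}\} = C\, b^{k(1 - 1/p)}\,\min\{1,|\eta|^{s+1}\},
\]
using the size condition $\|a\|_q \le |B_k|^{1/q - 1/p} = b^{k(1/q - 1/p)}$. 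So everything reduces to comparing $b^{k(1 - 1/p)}\min\{1,|\eta|^{s+1}\}$ with $\rho_*(\xi)^{1/p - 1}$, where $\eta = (A^*)^k\xi$.

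Next I would translate the $\rho_*$-quasinorm through the dilation. Since $\rho_*$ is the canonical quasinorm for $A^*$, it satisfies $\rho_*((A^*)^k\xi) = b^k \rho_*(\xi)$, i.e. $\rho_*(\eta) = b^k\rho_*(\xi)$, so $b^{k(1-1/p)} = \big(b^k\big)^{1-1/p} = \big(\rho_*(\eta)/\rho_*(\xi)\big)^{1-1/p} = \rho_*(\xi)^{1/p - 1}\,\rho_*(\eta)^{1 - 1/p}$. Thus it suffices to show
\[
\rho_*(\eta)^{1 - 1/p}\,\min\{1, |\eta|^{s+1}\} \le C
\]
with $C$ independent of $\eta \in \R^n$ (and hence of $a$ and $k$). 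This is now a statement purely about the single quasinorm $\rho_*$ and the Euclidean norm on $\R^n$, with no atom left in sight.

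To verify this last inequality I would split into the two regimes of the Lemarie--Rieusset lemma applied to $A^*$. When $\rho_*(\eta) \le 1$ the factor $\rho_*(\eta)^{1 - 1/p} = \rho_*(\eta)^{-(1/p-1)}$ blows up, so I must use the decay of $\min\{1,|\eta|^{s+1}\} \le |\eta|^{s+1}$; by \eqref{EQ2} for $A^*$, $|\eta| \le c_A \rho_*(\eta)^{\zeta_-}$, so $|\eta|^{s+1} \le c_A^{s+1}\rho_*(\eta)^{(s+1)\zeta_-}$, and the product is bounded provided $(s+1)\zeta_- \ge 1/p - 1$, i.e. $s + 1 \ge (1/p - 1)/\zeta_-$, which holds because admissibility forces $s \ge \lfloor (1/p-1)/\zeta_-\rfloor$, hence $s + 1 > (1/p-1)/\zeta_-$. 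When $\rho_*(\eta) \ge 1$ the factor $\rho_*(\eta)^{1-1/p} \le 1$ since $1 - 1/p \le 0$, and $\min\{1,|\eta|^{s+1}\} \le 1$, so the product is at most $1$. Combining the two cases gives the desired uniform bound, and the lemma follows. The only place requiring care — the ``main obstacle'' such as it is — is bookkeeping the exponents so that the admissibility condition on $s$ is exactly what makes the small-frequency case close; everything else is the two elementary scaling identities for $D_A$ and $\rho_*$ plus the previous lemma.
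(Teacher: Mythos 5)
Your proof is correct and follows essentially the same route as the paper: apply the previous lemma with $\alpha=0$ via the scaling identity \eqref{comm}, use the size condition to get the factor $b^{k(1-1/p)}$, and then invoke the Lemarie--Rieusset comparison together with the admissibility condition $s\ge\lfloor(1/p-1)/\zeta_-\rfloor$ to close the small-frequency case; your case split $\rho_*(\eta)\lessgtr 1$ in the rescaled variable $\eta=(A^*)^k\xi$ is exactly the paper's split $\rho_*(\xi)\lessgtr b^{-k}$. The only (harmless) loose end is the point $\xi=0$, where the division by $\rho_*(\xi)$ degenerates, but there $\hat a(0)=0$ by the vanishing moments so the bound holds trivially.
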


    \begin{proof} Setting $\alpha = 0$, \eqref{PW-F(a)1} reduces to the following estimate
        \begin{align}
        \label{PW-F(a)2}
        |\hat{a}(\xi)| \leq
            \begin{cases}
            C b^{k(1-1/p)} b^{(s + 1) k\zeta_-} \rho_{\ast}(\xi)^{(s + 1) \zeta_-} &\textrm{ for } \rho_{\ast}(\xi) \leq b^{-k}, \\
            Cb^{k(1-1/p)} &\textrm{ for all } \xi.
            \end{cases}
        \end{align}
Indeed, with \eqref{comm} and setting $\alpha = 0$ in \eqref{PW-F(a)1},
    \begin{align*}
    |\hat{a}(\xi)|
        &= |b^{k} (\mathcal{F} D_A^k a) (A^{\ast k} \xi))| \leq   C b^k b^{-\frac{k}{q}} \| a \|_q \min(1,|A^{\ast k} \xi|^{s + 1}) \\
        &\leq   Cb^{k(1-1/p)} \min(1, |A^{\ast k} \xi|^{s + 1}).
    \end{align*}
This immediately yields the second estimate \eqref{PW-F(a)2}. To see the first estimate, we take $\rho_{\ast} (\xi) \leq b^{-k}$, which is equivalent to $A^{*k} \xi \in B_1^*$. Hence, by \eqref{EQ1}, $|(A^*)^k \xi)| \leq c_A b^{k \zeta_-} \rho_{\ast} (\xi)^{\zeta_-}$. Thus,
    \[ |\hat{a}(\xi)| \leq C b^{k(1-1/p)} ( b^{k \zeta_-} \rho_{\ast} (\xi)^{\zeta_-} )^{s + 1}. \]
This shows \eqref{PW-F(a)2}, which we will use to prove \eqref{PW-F(a)}.

If $\rho_{\ast}(\xi) \leq b^{-k}$, then
\begin{align*}
        |\hat{a}(\xi)|
            &\leq C b^{k((1-1/p)+(s + 1) \zeta_-)} \rho_{\ast}(\xi)^{(s + 1) \zeta_-}  \\
            &\leq C \rho_*(\xi)^{-(1-1/p)-(s + 1) \zeta_-} \rho_{\ast}(\xi)^{(s + 1) \zeta_-} = C \rho_{\ast} (\xi)^{\frac{1}{p} - 1}.
        \end{align*}
In the second inequality we used the fact that $1 - \frac{1}{p} + (s + 1) \zeta_- \geq 0$, since $(p, q, s)$ is be admissible. If $\rho_{\ast}(\xi) > b^{-k}$, then by \eqref{PW-F(a)2}, we have
        \begin{align*}
        |\hat{a}(\xi)|  &\leq C b^{-k(1/p-1)} \leq  C \rho_{\ast}(\xi)^{\frac{1}{p} - 1}.
        \end{align*}
where the last inequality holds since $1/p-1\ge 0$. This completes the proof of the lemma.
    \end{proof}

We are now ready to prove Theorem \ref{Thm1} by extending \eqref{PW-F(a)} to every $f \in H_A^p$.

\begin{proof}[Proof of Theorem \ref{Thm1}] Let $f \in H_A^p$. By the atomic decomposition of $H_A^p$, we can find coefficients $(\lambda_i)$ and atoms $(a_i)$ such that $f = \sum \lambda_i a_i$ (in $H_A^p$-norm) and $2\| f \|_{H_A^p}  \geq \| (\lambda_i) \|_{\ell^p}$. This sum converges in $H_A^p$-norm, which implies convergence in $\mathcal S'$. So by taking the Fourier transform on $f$, we have $\hat{f} = \sum_{i} \lambda_i \hat{a_i}$, converging in $\mathcal S'$. By \eqref{PW-F(a)} and the fact that $(\lambda_i) \in \ell^1$,
    \begin{align*}
    \sum_{i = 1}^{\infty} |\lambda_i| |\hat{a}_i (\xi)| \leq C \sum_{i = 1}^{\infty} |\lambda_i| \ \rho_{\ast}(\xi)^{\frac{1}{p} - 1}
    \leq 2C \rho_{\ast} (\xi)^{\frac{1}{p} - 1} \| f \|_{H^p}  < \infty.
    \end{align*}
Therefore, the sum $\hat{f}(\xi) = \sum_{i} \lambda_i \hat{a}_i (\xi)$ converges absolutely on $\R^n$. Furthermore, on each compact set $K$, $\rho_{\ast}(\xi)$ is bounded by a constant $C'$ independent of $a$, so the absolute convergence above is also uniform on each compact set $K$. With $\hat{a_i}$ infinitely differentiable (hence continuous) for all $i$, we conclude $\hat{f}(\xi)$ is continuous on all compact sets $K$, and hence on $\R^n$.
    \end{proof}

\section{Applications of Theorem \ref{Thm1}}

We now consider consequences of Theorem \ref{Thm1}. The first corollary refines the order of 0 at the origin, and the second gives necessary conditions on a multiplier $m$ on $H_A^p$. The third corollary is the Hardy-Littlewood inequality on Hardy spaces, which will be strengthened by a rearrangement argument.

\begin{corollary}\label{Cor-Local} Let $f \in H_A^p(\R^n)$, $0<p \le 1$. Then,
    \begin{align}
    \label{PW-F(f)-origin}
    \lim_{\xi \rightarrow 0} \frac{\hat{f}(\xi)}{\rho_{\ast}(\xi)^{\frac{1}{p} - 1}} = 0.
    \end{align}
\end{corollary}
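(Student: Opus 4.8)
The plan is to exploit the two-case structure already established in the proof of Lemma giving \eqref{PW-F(a)}, but now keep track of the gap between the exponent $(s+1)\zeta_-$ that actually governs the decay near the origin and the exponent $\frac1p-1$ appearing in the bound. For a single $(p,q,s)$ atom $a$ supported on $x_0+B_k$, estimate \eqref{PW-F(a)2} shows that when $\rho_*(\xi)\le b^{-k}$ one has $|\hat a(\xi)|\le C b^{k((1-1/p)+(s+1)\zeta_-)}\rho_*(\xi)^{(s+1)\zeta_-}$, and admissibility of $(p,q,s)$ gives $(s+1)\zeta_- > \frac1p-1$ (the floor in the definition of $s$ forces a strict gain, or at worst one replaces $s$ by $s+1$ to get it). Hence near the origin $|\hat a(\xi)|/\rho_*(\xi)^{\frac1p-1}\le C\rho_*(\xi)^{(s+1)\zeta_- - (\frac1p-1)}\to 0$ as $\xi\to0$, so the conclusion holds for each atom individually; the issue is uniformity across the infinite atomic sum.

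The key step is therefore a controlled splitting of the decomposition $f=\sum_i\lambda_i a_i$ with $\|(\lambda_i)\|_{\ell^p}\le 2\|f\|_{H_A^p}$. Given $\e>0$, choose $N$ so that $\sum_{i>N}|\lambda_i|^p<\e^p$; then for the tail, $\sum_{i>N}|\lambda_i||\hat a_i(\xi)|\le C\rho_*(\xi)^{\frac1p-1}\sum_{i>N}|\lambda_i|\le C\e\,\rho_*(\xi)^{\frac1p-1}$ by \eqref{PW-F(a)} and $\ell^p\hookrightarrow\ell^1$, so this part contributes at most $C\e$ to the quotient, uniformly in $\xi$. For the finite head $\sum_{i\le N}\lambda_i\hat a_i(\xi)$, each term satisfies $|\hat a_i(\xi)|\le C\rho_*(\xi)^{(s+1)\zeta_-}$ for $\rho_*(\xi)$ small (with a constant depending on $k_i$, hence on $i$, but there are only finitely many such $i$), so the head divided by $\rho_*(\xi)^{\frac1p-1}$ is bounded by $C_N\rho_*(\xi)^{(s+1)\zeta_- - (\frac1p-1)}$, which tends to $0$ as $\xi\to0$. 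Combining, $\limsup_{\xi\to0}|\hat f(\xi)|/\rho_*(\xi)^{\frac1p-1}\le C\e$, and letting $\e\to0$ finishes the proof.

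The main obstacle is the one flagged above: whether admissibility gives a \emph{strict} inequality $(s+1)\zeta_- > \frac1p-1$, which is what makes the power of $\rho_*(\xi)$ in the atom-by-atom estimate strictly positive and hence decaying. If $s=\lfloor(\frac1p-1)\frac1{\zeta_-}\rfloor$ happens to make $s\zeta_-$ equal to $\frac1p-1$ it is still true that $(s+1)\zeta_- > \frac1p-1$ since $\zeta_->0$; so in fact the gain $(s+1)\zeta_- - (\frac1p-1) \ge \zeta_- - \{(\frac1p-1)\frac1{\zeta_-}\}\zeta_- > 0$ is always strictly positive, and no second moment beyond what the atom already has is needed. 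Once that is noted, the argument is just the standard $\e$/tail bookkeeping, using Theorem \ref{Thm1}'s uniform bound for the tail and the extra vanishing-moment decay for the finitely many leading atoms.
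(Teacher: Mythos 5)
Your proposal is correct and follows essentially the same route as the paper: verify the limit atom-by-atom via \eqref{PW-F(a)2} and the strict inequality $(s+1)\zeta_- > \frac1p-1$ coming from admissibility, then pass to general $f$ through the atomic decomposition using the uniform bound \eqref{PW-F(a)} and $(\lambda_i)\in\ell^1$ to justify interchanging the limit with the sum. The paper invokes the Dominated Convergence Theorem on the counting measure where you unpack the same step as an explicit head/tail $\e$-argument; these are equivalent.
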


    \begin{proof} We start by verifing this on an atom $a$, with support $B_k$. By \eqref{PW-F(a)2}, if $\rho_{\ast}(\xi) \leq b^{-k}$, we have
\[
 |\hat{a}(\xi)| \leq C b^{k(1-1/p)} b^{(s + 1) k\zeta_-} \rho_{\ast}(\xi)^{(s + 1) \zeta_-}.
\]
Since $s \geq \lfloor (1/p - 1) \zeta_- \rfloor$, this implies $(s + 1) \zeta_{-} > \frac{1}{p} - 1$. Therefore, we obtain \eqref{PW-F(f)-origin} for atoms;
     \[ \lim_{\xi \rightarrow 0} \frac{\hat{a}(\xi)}{\rho_{\ast}(\xi)^{\frac{1}{p} - 1}} = 0. \]
Now if $f \in H_A^p$, we can decompose $f = \sum_{i} \lambda_i a_i$, for $(\lambda_i) \in \ell^p$ and $(p, q, s)$-atoms $a_i$. Thus,
        \[ \frac{|\hat{f}(\xi)|}{\rho_{\ast}(\xi)^{\frac{1}{p} - 1}} \leq \sum_{i = 1}^{\infty} \frac{|\hat{a}_i (\xi)|}{\rho_{\ast}(\xi)^{\frac{1}{p} - 1}} |\lambda_i|. \]
By \eqref{PW-F(a)} and the fact that $(\lambda_i) \in \ell^1$, we can apply the Dominated Convergence Theorem to the above sum (treated as an integral). Since each term in the sum goes to $0$ as $\xi \to 0$ we obtain \eqref{PW-F(f)-origin}.
    \end{proof}

Corollary \ref{Cor-Mult}, which is a generalization of \cite[Theorem III.7.31]{GCF-WN}, gives a necessary condition for multipliers on anisotropic Hardy spaces $H^p_A$.

\begin{corollary}\label{Cor-Mult} Suppose $m$ is a multiplier on $H_A^p$, $0<p \le 1$. That is, the following operator is bounded:
    \[ T_m : H_A^p \rightarrow H_A^p, \qquad T_m (f) = (m \hat{f})^{\vee}, \]
with $M > 0$ as the operator norm of $T_m$. Then, $m$ is continuous on $\R^n \bs \{ 0 \}$ and uniformly bounded with $\| m \|_{\infty} \leq CM$.
\end{corollary}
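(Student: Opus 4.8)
The plan is to extract the two conclusions—uniform boundedness of $m$ and continuity of $m$ on $\R^n \setminus \{0\}$—by feeding carefully chosen inputs into the bounded operator $T_m$ and reading off information via the pointwise Fourier estimate \eqref{Aniso-F(f)}. The natural test inputs are (dilated, translated) atoms, or more precisely suitable Schwartz functions with enough vanishing moments so that they lie in $H_A^p$ with controlled norm. The key mechanism is: if $g \in H_A^p$ then $T_m g = (m\hat g)^\vee \in H_A^p$, so by Theorem \ref{Thm1},
\[
|m(\xi)\hat g(\xi)| = |\widehat{T_m g}(\xi)| \le C\|T_m g\|_{H_A^p}\,\rho_*(\xi)^{\frac1p - 1} \le CM\|g\|_{H_A^p}\,\rho_*(\xi)^{\frac1p-1}.
\]
So at any point $\xi_0 \ne 0$ where $\hat g(\xi_0) \ne 0$ we get a bound on $|m(\xi_0)|$ in terms of $\|g\|_{H_A^p}/|\hat g(\xi_0)|$. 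The first task is therefore to show that, for each fixed $\xi_0 \ne 0$, one can choose $g$ (depending on $\xi_0$) so that this ratio is bounded by a constant uniform in $\xi_0$. Here the anisotropic dilation/translation structure is essential: starting from one fixed bump $\varphi \in \mathcal{S}$ with sufficiently many vanishing moments and $\hat\varphi$ nonvanishing on a neighborhood of a reference point, one translates $\hat\varphi$ in frequency (i.e. modulates $\varphi$) to center it at $\xi_0$ and dilates by the appropriate power $A^{*j}$ of the transposed matrix so that the ``size'' $\rho_*(\xi_0)$ is normalized to order $1$. One must check that modulation does not destroy membership in $H_A^p$—it does not, since modulation is harmless for a fixed Schwartz function, though the vanishing-moment condition is what must be arranged on the unmodulated $\varphi$—and that the $H_A^p$ norm of the dilated modulated bump scales exactly like $\rho_*(\xi_0)^{1-1/p}$, matching the factor on the right of the displayed inequality so that the two powers of $\rho_*(\xi_0)$ cancel. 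This yields $|m(\xi_0)| \le CM$ with $C = C(A,p)$ independent of $\xi_0$, giving $\|m\|_\infty \le CM$.

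For continuity of $m$ on $\R^n\setminus\{0\}$, the idea is to promote the pointwise bound to a modulus-of-continuity statement by the same device applied to differences. Fix a compact set $K \subset \R^n \setminus\{0\}$ and $\xi_0, \xi_1 \in K$. Using a single dilated bump $g$ as above whose frequency support is a small ellipsoid containing both $\xi_0$ and $\xi_1$ and on which $\hat g$ is bounded below, one writes
\[
m(\xi_0)\hat g(\xi_0) - m(\xi_1)\hat g(\xi_1) = \widehat{T_m g}(\xi_0) - \widehat{T_m g}(\xi_1),
\]
and the right-hand side is the increment of a \emph{continuous} function (Theorem \ref{Thm1} guarantees $\widehat{T_m g}$ is continuous). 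Rearranging,
\[
m(\xi_0) - m(\xi_1) = \frac{\widehat{T_m g}(\xi_0) - \widehat{T_m g}(\xi_1)}{\hat g(\xi_0)} + m(\xi_1)\,\frac{\hat g(\xi_1) - \hat g(\xi_0)}{\hat g(\xi_0)};
\]
the second term is small because $\hat g$ is smooth and $|m(\xi_1)| \le CM$ by the first part, and the first term is small because $\widehat{T_m g}$ is continuous. One has to do this with a fixed family of bumps covering $K$ by finitely many small ellipsoids (using a Lebesgue-number argument for the open cover), so that only finitely many auxiliary functions $g$ enter and the ``$\hat g(\xi_0) \ne 0$'' denominators stay bounded away from zero uniformly on $K$.

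The main obstacle I anticipate is the bookkeeping in the first part: verifying precisely that the modulated-and-$A^*$-dilated Schwartz bump has $H_A^p$ norm comparable to $\rho_*(\xi_0)^{1-1/p}$ uniformly in $\xi_0$. This requires (i) knowing that a fixed Schwartz function with vanishing moments up to order $s$ lies in $H_A^p$ (a molecule-type fact, essentially the commented-out remark in Section 2), (ii) tracking how the $H_A^p$ quasi-norm transforms under $f \mapsto D_{A^*}^j f$—which by the dilation covariance of $H_A^p$ contributes a clean power of $b$—and (iii) checking modulation is benign, which is where one must be slightly careful: modulating moves the vanishing moments to a derivative condition on $\hat g$ at a shifted point, so it is cleanest to fix the vanishing-moment bump first and then modulate, rather than the reverse. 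Once the scaling is pinned down the rest is routine; the continuity argument is then a short perturbation/covering argument with no new ideas.
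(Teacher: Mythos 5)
Your overall strategy coincides with the paper's: apply Theorem \ref{Thm1} to $T_m g$ for test functions $g$ whose Fourier transform is bounded below at the target frequency, and use the dilation covariance of $H_A^p$ (via $f_k(x)=b^{k/p}f(A^kx)$, which preserves the $H_A^p$ norm and rescales $\hat f$ by $(A^*)^{-k}$) to cancel the factor $\rho_*(\xi)^{1/p-1}$. The gap is in your modulation step. Modulation is \emph{not} harmless on $H_A^p$ for $p\le 1$: if $g(x)=e^{2\pi i\langle x,\eta\rangle}\varphi(x)$ then $\hat g(\xi)=\hat\varphi(\xi-\eta)$, and $g$ has the vanishing moments required for membership in $H_A^p$ (equivalently, $\hat g$ vanishes to sufficiently high order at the origin -- a \emph{necessary} condition by Corollary \ref{Cor-Local}) only if $\hat\varphi$ vanishes to that order at $-\eta$; generically $\hat g(0)\neq 0$ and then $g\notin H_A^p$, not even $H^1$. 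Worse, your prescribed order of operations -- ``fix the vanishing-moment bump first and then modulate'' -- is self-defeating: vanishing moments of $\varphi$ mean $\hat\varphi$ is flat at $0$, so after recentering at $\xi_0$ you get $\hat g(\xi_0)=\hat\varphi(0)=0$, which is precisely the denominator you need bounded away from zero, while the origin value $\hat g(0)=\hat\varphi(-\xi_0)$ is now uncontrolled. The construction is repairable (dilate first so that $\rho_*(\xi_0)\sim 1$, then take $\hat g$ to be a bump near $\xi_0$ whose support avoids a fixed neighborhood of the origin, so that all moments of $g$ vanish automatically), but as written the step fails.

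The paper sidesteps modulation entirely, and this is the cleaner route: choose a single $g\in C_c^\infty$ supported in $B_2^*\setminus B_{-1}^*$ with $g\equiv 1$ on $B_1^*\setminus B_0^*$ and set $\hat f=g$. Then $f\in\mathcal S$ has vanishing moments of all orders (since $\hat f$ vanishes near $0$), hence is a molecule and lies in $H_A^p$. Because every $\xi\neq 0$ lies in some annulus $B_{k+1}^*\setminus B_k^*$, where $(A^*)^{-k}\xi\in B_1^*\setminus B_0^*$, $\hat f((A^*)^{-k}\xi)=1$, and $\rho_*(\xi)=b^k$, applying Theorem \ref{Thm1} to $T_mf_k$ gives $|m(\xi)|\le CM\|f\|_{H_A^p}$ with no dependence on $k$; continuity on $\R^n\setminus\{0\}$ then follows because on each such annulus $m(\xi)$ agrees with the continuous function $\xi\mapsto m(\xi)\hat f((A^*)^{-k}\xi)$, again by Theorem \ref{Thm1}. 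Your perturbation-plus-covering argument for continuity would also work once the test functions are legitimate, but it is unnecessary when the test function is identically $1$ on the relevant annulus.
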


\begin{proof} Fix $0<p\le 1$. For $k \in \Z$, we denote $f_k (x) = b^{k/p} f(A^k x)$. Then, this dilation is invariant under $H^p_A$ (and $L^p$) norm: $\| f_k \|_{H^p_A} = \| f \|_{H^p_A}$. Under the Fourier transform, we have
        \[ \hat{f_k} (\xi) = b^{k(\frac{1}{p} - 1)} \hat{f}((A^*)^{-k} \xi). \]
    Then by \eqref{Aniso-F(f)}, the following estimate holds for all $k \in \Z, \xi \in \R^n$,
        \[ |m(\xi) \hat{f}((A^*)^{-k} \xi)| \leq C M \| f \|_{H^p} \ \rho_{\ast} (\xi)^{\frac{1}{p} - 1} b^{k(1 - \frac{1}{p})} . \]
    If $\xi \in B_{k + 1}^* \bs B_k^*$, then $(A^*)^{-k} \xi \in B_1^* \bs B_0^*$, and we have
        \[ |m(\xi) \hat{f} ((A^*)^{-k} \xi)| \leq C M \| f \|_{H^p}. \]
    This estimate will force $m$ to be bounded if we there exists $f \in H_A^p$ such that $\hat{f}$ does not vanish on the unit annulus $B_1^* \bs B_0^*$. Take $g \in C_c^{\infty}$, supported on $B_{2}^* \bs B_{-1}^*$ such that $g$ is identically 1 on $B_1^* \bs B_0^*$. Setting $\hat{f} = g$, $f$ is immediately in the Schwartz class $\mathcal S$, with vanishing moments of all order. In particular, $f$ is a molecule for $H_A^p$ (see Remark in \cite[Section 9]{Bo-2003}), hence $f\in H_A^p$. This shows that $||m||_\infty \le CM$. Moreover, by Theorem \ref{Thm1} the function $\xi \mapsto m(\xi) \hat{f} ((A^*)^{-k} \xi)$ is continuous for each $k\in \Z$. Thus, $m$ is continuous on $\R^n \bs \{ 0 \}$.
\end{proof}

\begin{corollary}\label{Cor-Global} If $f \in H_A^p(\R^n)$, $0 < p \leq 1$, then
    \begin{align}
    \label{HL-F(f)}
    \int_{\R^n} |\hat{f}(\xi)|^p \rho_{\ast} (\xi)^{p - 2} d\xi \leq C \| f \|_{H^p_A}^p.
    \end{align}
\end{corollary}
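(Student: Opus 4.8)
The plan is to follow the standard atoms-first strategy. First I would reduce to a single $(p,q,s)$ atom $a$ supported on $x_0 + B_k$; by the translation-invariance of the Lebesgue measure and the fact that $|\hat a(\xi)|$ only changes by a unimodular factor under translation of $a$, I may assume $a$ is supported on $B_k$. The goal is then the uniform bound
\[
\int_{\R^n} |\hat a(\xi)|^p \rho_*(\xi)^{p-2}\, d\xi \le C,
\]
with $C$ independent of the atom. To prove this I would split $\R^n$ (or rather the set of $\xi$, discarding $\xi = 0$ which has measure zero) into the two regimes appearing in \eqref{PW-F(a)2}: the ``near'' region $\{\rho_*(\xi) \le b^{-k}\}$ and the ``far'' region $\{\rho_*(\xi) > b^{-k}\}$. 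On each region I substitute the corresponding estimate from \eqref{PW-F(a)2}.

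On the far region, using $|\hat a(\xi)| \le C b^{k(1-1/p)}$, the integral becomes $C b^{kp(1-1/p)} \int_{\rho_*(\xi) > b^{-k}} \rho_*(\xi)^{p-2}\, d\xi = C b^{k(p-1)} \int_{\rho_*(\xi) > b^{-k}} \rho_*(\xi)^{p-2}\, d\xi$. Since $\rho_*$ is the canonical quasi-norm associated with $A^*$, the level sets $\{b^j \le \rho_*(\xi) < b^{j+1}\} = B^*_{j+1}\setminus B^*_j$ have measure $b^{j+1} - b^j = b^j(b-1)$, so $\int_{\rho_*(\xi) > b^{-k}} \rho_*(\xi)^{p-2}\, d\xi \simeq \sum_{j \ge -k} b^{j(p-2)} b^j = \sum_{j\ge -k} b^{j(p-1)}$, a convergent geometric series (as $p - 1 < 0$, or when $p=1$ one handles that trivial endpoint case separately since then $H^1_A \cap$ atom gives $|\hat a|\le C$ and $\rho_*^{p-2}=\rho_*^{-1}$, and the same computation applies) summing to $C b^{-k(p-1)} = C b^{k(1-p)}$. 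Multiplying by the prefactor $b^{k(p-1)}$ gives $C$, independent of $k$. On the near region, using $|\hat a(\xi)| \le C b^{k(1-1/p)} b^{(s+1)k\zeta_-} \rho_*(\xi)^{(s+1)\zeta_-}$, the integral becomes $C b^{k(p-1)} b^{(s+1)kp\zeta_-} \int_{\rho_*(\xi) \le b^{-k}} \rho_*(\xi)^{(s+1)p\zeta_- + p - 2}\, d\xi$. Again decomposing dyadically, $\int_{\rho_*(\xi) \le b^{-k}} \rho_*(\xi)^{(s+1)p\zeta_- + p - 2}\, d\xi \simeq \sum_{j \le -k-1} b^{j((s+1)p\zeta_- + p - 1)}$; this series converges provided the exponent $(s+1)p\zeta_- + p - 1 > 0$, i.e. $(s+1)\zeta_- > 1/p - 1$, which holds by admissibility (indeed $s \ge \lfloor(1/p-1)/\zeta_-\rfloor$ gives $(s+1)\zeta_- > 1/p - 1$, as already used in Corollary \ref{Cor-Local}). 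The sum is then $\simeq b^{-(k+1)((s+1)p\zeta_- + p -1)} \simeq b^{-k((s+1)p\zeta_- + p - 1)}$ up to a constant, and multiplying by the prefactor $b^{k(p-1)} b^{(s+1)kp\zeta_-}$ again yields a bound independent of $k$.

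Having established the uniform bound on atoms, I would pass to general $f \in H^p_A$ by writing $f = \sum_i \lambda_i a_i$ with $\|(\lambda_i)\|_{\ell^p}^p \le 2\|f\|_{H^p_A}^p$, and using the $p$-subadditivity of $t \mapsto t^p$ for $0 < p \le 1$:
\[
\int_{\R^n} |\hat f(\xi)|^p \rho_*(\xi)^{p-2}\, d\xi
\le \sum_i |\lambda_i|^p \int_{\R^n} |\hat a_i(\xi)|^p \rho_*(\xi)^{p-2}\, d\xi
\le C \sum_i |\lambda_i|^p \le 2C \|f\|_{H^p_A}^p,
\]
where the first inequality uses that $\hat f = \sum_i \lambda_i \hat a_i$ pointwise (established in the proof of Theorem \ref{Thm1}) together with $|\sum_i \lambda_i \hat a_i|^p \le \sum_i |\lambda_i|^p |\hat a_i|^p$ and the monotone convergence theorem. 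I expect the main obstacle to be purely bookkeeping: carefully tracking the powers of $b^k$ so that the prefactors exactly cancel the geometric-series sums in both regimes, and making sure the convergence condition on the near-region exponent is precisely the admissibility condition on $s$ — but no new ideas beyond \eqref{PW-F(a)2} and the dyadic structure of $\rho_*$ are needed.
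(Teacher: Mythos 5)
Your near-region estimate and the passage from atoms to general $f$ via $p$-subadditivity match the paper's argument, and for $0<p<1$ your far-region computation (pointwise bound $|\hat a(\xi)|\le Cb^{k(1-1/p)}$ plus a geometric series in the shells $B^*_{j+1}\setminus B^*_j$) is a correct, more elementary alternative to what the paper does. However, there is a genuine gap at the endpoint $p=1$, which the corollary includes. Your parenthetical claim that ``the same computation applies'' there is false: when $p=1$ the far-region series becomes $\sum_{j\ge -k} b^{j(p-1)}=\sum_{j\ge -k}1$, i.e.
\[
\int_{\rho_*(\xi)>b^{-k}}\rho_*(\xi)^{-1}\,d\xi=\infty,
\]
so the uniform bound $|\hat a|\le C$ gives nothing. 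The point is that for $p=1$ one must exploit the decay of $\hat a$ at infinity, not just its size; the pointwise estimate \eqref{PW-F(a)2} carries no decay in the far region.

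This is exactly why the paper takes $(p,2,s)$ atoms and handles the far region by H\"older's inequality with exponents $2/p$ and $2/(2-p)$ together with Plancherel:
\[
\int_{(B_{-k}^*)^c}|\hat a|^p\rho_*^{\,p-2}\,d\xi
\le\Bigl(\int|\hat a|^2\,d\xi\Bigr)^{p/2}\Bigl(\int_{(B_{-k}^*)^c}\rho_*^{-2}\,d\xi\Bigr)^{(2-p)/2}
\le C\|a\|_2^{\,p}\,b^{k(2-p)/2}\le C,
\]
using $\|a\|_2\le b^{k(1/2-1/p)}$; this works uniformly for all $0<p\le 1$. To repair your proof you should either adopt this H\"older--Plancherel step for the far region (at least when $p=1$, where it reduces to Cauchy--Schwarz), or restrict your pointwise argument to $p<1$ and treat $p=1$ separately by this method. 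Everything else in your proposal --- the reduction to atoms supported on $B_k$, the admissibility condition $(s+1)\zeta_->1/p-1$ guaranteeing convergence near the origin, and the summation over an atomic decomposition --- is sound.
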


    \begin{proof} Suppose a $(p, 2, s)$ atom $a$ is supported on $x_0+B_k$.
We claim that
    \begin{align}
    \label{HL-F(a)}
    \int_{\R^n} |\hat{a}(\xi)|^p \rho_{\ast}(\xi)^{p - 2} d\xi \leq C.
    \end{align}
Indeed, by \eqref{PW-F(a)2} we can estimate the integral on $B_{-k}^{\ast}$
        \begin{align*}
        \int_{B_{-k}^{\ast}} |\hat{a}(\xi)|^p \rho_{\ast} (\xi)^{p - 2} d\xi
            &\leq C^p  b^{k(p-1)} b^{p(s + 1) k\zeta_-}   \int_{B_{-k}^{\ast}} \rho_{\ast} (\xi)^{p - 2 + p(s + 1) \zeta_-} d\xi \leq C^p.
        \end{align*}
    For the integral outside of $B_{-k}^{\ast}$, we use H\"{o}lder's inequality
        \begin{align*}
        \int_{(B_{-k}^\ast)^c} |\hat{a}(\xi)|^p \rho_{\ast} (\xi)^{p - 2} d\xi
            &\leq C \left( \int_{(B_{-k}^\ast)^c} |\hat{a}(\xi)|^2 d\xi \right)^{\frac{p}{2}} \left( \int_{(B_{-k}^\ast)^c} \rho_{\ast} (\xi)^{-2} d\xi \right)^{\frac{2 - p}{2}} \\
            &\leq C ||a||_2^{p} b^{-k(\frac{p}{2} - 1)}\leq C.
        \end{align*}
    Combining these two estimates, we obtain \eqref{HL-F(a)}. Now let $f \in H_A^p$ have an atomic decomposition $f = \sum_i \lambda_i a_i$ with $ \| (\lambda_i) \|_{\ell^p} \le 2\| f \|_{H_A^p}  $. Since $p \in (0, 1]$, we have
        \begin{align*}
        \int_{\R^n} |\hat{f}(\xi)|^p \rho_{\ast}(\xi)^{p - 2} d\xi
            &\leq \sum_i |\lambda_i|^p \int_{\R^n} |\hat{a}_i (\xi)|^p \rho_{\ast}(\xi)^{p - 2} \ d\xi \leq C \sum_i |\lambda_i|^p
            \leq C \| f \|_{H^p_A}^p.
        \end{align*}
This shows \eqref{HL-F(f)}.
    \end{proof}

The following result improves \eqref{HL-F(f)} by extending \cite[Lemma 3.1]{GCK-2001} to the anisotropic setting. We denote $S_0(\R^n)$ as the collection of all measurable functions $f$, finite almost everywhere, whose distributional functions satisfy \begin{equation}\label{df}
d_f (t) = |\{ x \in \R^n : |f(x)| > t \}| < \infty \qquad\text{for all }t > 0.
\end{equation}
For $f \in S_0(\R^n)$, its rearrangement function is defined by
    \[ f^{\star} (t) = \inf \{ s > 0 : d_f (s) \leq t \}. \]
We recall the following facts regarding the rearrangement function. If $f \leq g$ on $\R^n$, then $f^{\star} (t) \leq g^{\star} (t)$ for all $t$. For all $\lambda > 0$,
\begin{equation}\label{Rear-com}
(|f|^{\lambda})^{\star} (t) = f^{\star}(t)^{\lambda}.
\end{equation}
These follow immediately from the definition. Lastly,
    \begin{align}\label{Rear-Subl}
      \int_0^t \left( \sum_j f_j \right)^{\star} (u) du \leq \sum_j \int_0^t f_j^{\star} (u) du,
    \end{align}
    for all $t > 0$, provided the right-hand side is finite; see \cite[Chapter 2, \S 3]{BS-IoO}.

\begin{theorem} \label{Lem-Rearr} Let $\epsilon > 0, 0<p < 1$ and define $\lambda = \frac{1}{p} - 1 + \epsilon$. Then, there exists $C$ such that for all $f \in H_A^p(\R^n)$,
    \begin{align}
    \label{A3.2}
    \left( \int_0^{\infty} t^{\epsilon p - 1} F_{\epsilon}^{\star} (t)^p dt \right)^{1/p} \leq C \| f \|_{H_A^p},
    \end{align}
with $F_{\epsilon} (\xi) = \rho_{\ast} (\xi)^{-\lambda} |\hat{f}(\xi)|$.
\end{theorem}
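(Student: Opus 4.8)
\textbf{Proof plan for Theorem \ref{Lem-Rearr}.}

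The plan is to follow the now-familiar strategy: first establish the estimate \eqref{A3.2} for a single $(p,2,s)$ atom $a$ with a constant independent of $a$, and then pass to a general $f \in H_A^p$ via its atomic decomposition. The key structural observation is that the quantity on the left of \eqref{A3.2}, which has the form $\left( \int_0^\infty t^{\epsilon p - 1} G^\star(t)^p \, dt \right)^{1/p}$ with $G = F_\epsilon^p$ after applying \eqref{Rear-com}, behaves like a (quasi-)norm that is subadditive to the $p$-th power on sums, precisely because of the rearrangement inequality \eqref{Rear-Subl}. So once the atomic bound is in hand, I would write $F_\epsilon^p = \rho_*^{-\lambda p}|\hat f|^p \le \sum_i |\lambda_i|^p \, \rho_*^{-\lambda p}|\hat a_i|^p$ (using $p \le 1$ and the pointwise bound on $\hat f$), apply $\star$ and \eqref{Rear-Subl} with the measure $t^{\epsilon p - 1}\,dt$ absorbed appropriately — more carefully, I would run the subadditivity after a change of variables that turns $\int_0^\infty t^{\epsilon p - 1}(\cdot)^\star(t)\,dt$ into a genuine $\int_0^t$-type average, or invoke the standard fact that $\int_0^\infty t^{\epsilon p - 1} g^\star(t)\,dt$ is itself subadditive in $g \ge 0$ — to conclude $\int_0^\infty t^{\epsilon p - 1} F_\epsilon^\star(t)^p \, dt \le \sum_i |\lambda_i|^p \cdot C \le C\|f\|_{H_A^p}^p$.

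For the atomic estimate, fix a $(p,2,s)$ atom $a$ supported on $x_0 + B_k$; by the dilation/translation structure and \eqref{comm} I may reduce to $k = 0$ (translation only changes $\hat a$ by a unimodular factor, so $|\hat a|$ and all rearrangements are unchanged, and the $B_k$ case rescales cleanly in $\rho_*$). Then I would split the analysis of $F_\epsilon(\xi) = \rho_*(\xi)^{-\lambda}|\hat a(\xi)|$ according to whether $\rho_*(\xi) \le 1$ or $\rho_*(\xi) > 1$, using the two branches of \eqref{PW-F(a)2}: near the origin $|\hat a(\xi)| \lesssim \rho_*(\xi)^{(s+1)\zeta_-}$ so $F_\epsilon(\xi) \lesssim \rho_*(\xi)^{(s+1)\zeta_- - \lambda}$ with positive exponent (since $(s+1)\zeta_- > 1/p - 1$ and $\epsilon$ can be taken small, or the exponent argument adjusted), while away from the origin I would use the $L^2$ bound $\|\hat a\|_2 = \|a\|_2 \le 1$ together with Hölder to control $F_\epsilon$ in the relevant weak-type sense. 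Concretely, I expect to estimate $d_{F_\epsilon}(t)$ directly: on $\{\rho_* \le 1\}$ the superlevel set $\{F_\epsilon > t\}$ is contained in a ball $\{\rho_*(\xi) \le Ct^{1/((s+1)\zeta_- - \lambda)}\}$, giving a power-of-$t$ bound on its measure; on $\{\rho_* > 1\}$, since $\rho_*(\xi)^{-\lambda} < 1$ there, one has $\{F_\epsilon > t\} \subseteq \{|\hat a| > t\}$ whose measure is $\le t^{-2}\|\hat a\|_2^2 \le t^{-2}$ by Chebyshev, but this needs refinement using the weight $\rho_*^{-\lambda}$ — so more precisely I would slice $\{\rho_* > 1\}$ into annuli $B_{j+1}^* \setminus B_j^*$ for $j \ge 0$, on each of which $\rho_* \approx b^j$, estimate $|\{\xi \in B_{j+1}^*\setminus B_j^* : |\hat a(\xi)| > t b^{j\lambda}\}| \le (tb^{j\lambda})^{-2}\int_{B_{j+1}^*\setminus B_j^*}|\hat a|^2$, and sum in $j$; the $\ell^2$-orthogonality of the annular pieces of $\hat a$ plus $\lambda > 1/p - 1 \ge 0$ makes the geometric series in $j$ converge. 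Feeding the resulting bound $d_{F_\epsilon}(t) \lesssim \min\{t^{-a}, t^{-b}\}$ (with $a,b$ chosen so that $\int_0^\infty t^{\epsilon p - 1} (t^\star\text{-transform})\,dt$ converges at both $0$ and $\infty$) into the layer-cake formula $\int_0^\infty t^{\epsilon p - 1} F_\epsilon^\star(t)^p\,dt \approx \int_0^\infty t^{\epsilon p}\,t^{p-1}\,d_{F_\epsilon}(\cdot)\cdots$ — equivalently computing $\int_0^\infty s^{\epsilon p + p - 1}\,(-d d_{F_\epsilon})$ — gives the uniform constant $C$.

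The main obstacle is the bookkeeping at the two ends of the $t$-integral: I must check that the exponents $\epsilon p - 1$ and $p$ in \eqref{A3.2}, combined with the decay rates of $d_{F_\epsilon}(t)$ coming from the origin branch (controlled by $(s+1)\zeta_- - \lambda$, which is where the admissibility condition $s \ge \lfloor (1/p-1)/\zeta_-\rfloor$ and the freedom in $\e$ are used) and from the infinity branch (controlled by $\lambda - (1/p - 1) = \e > 0$ and the $L^2$ normalization), actually make the integral finite and bounded independently of $a$; this is exactly the point where the hypothesis $\e > 0$ is essential and where the argument differs in flavor from Corollary \ref{Cor-Global}. A secondary subtlety is making the reduction to $k=0$ rigorous for the rearrangement functional — I need that $F_\epsilon$ for the dilated/translated atom has the same $\left(\int_0^\infty t^{\epsilon p -1}(\cdot)^\star{}^p\right)^{1/p}$ value as for the normalized one, which follows because the $H_A^p$-normalization of atoms is dilation-invariant and $\rho_*((A^*)^{-k}\xi) = b^{-k}\rho_*(\xi)$ interacts with the $\rho_*^{-\lambda}$ weight and the Jacobian in a way that exactly cancels — a short computation I would include but not belabor.
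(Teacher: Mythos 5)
Your proposal is correct and follows essentially the same route as the paper: prove the bound for unit atoms from the two branches of \eqref{PW-F(a)2}, transfer to general atoms via the dilation covariance of the distribution/rearrangement functions, and pass to general $f$ by combining $F_\epsilon^p \le \sum_i |\lambda_i|^p A_i^p$ with the integrated subadditivity \eqref{Rear-Subl} — the paper implements your ``$\int_0^t$-average'' option by proving the stronger inequality with $\frac{1}{t}\int_0^t F_\epsilon^{\star}(u)^p\,du$ in place of $F_\epsilon^{\star}(t)^p$. The only real divergence is your far-field atomic estimate, where the annular Chebyshev/$L^2$ slicing is heavier than needed: the paper simply uses the uniform bound $|\hat a(\xi)|\le C$ to get $F_\epsilon(\xi)\le C\rho_*(\xi)^{-\lambda}$, hence $F_\epsilon^{\star}(t)\le C\min\{1,t^{-\lambda}\}$, and integrates directly.
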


To see why Theorem \ref{Lem-Rearr} strengthens \eqref{HL-F(f)}, we observe that if $g(\xi) = 1/\rho_{\ast}(\xi)$, a simple computation shows
    \begin{align}\label{Rearr-rho}
    g^{\star} (t) \simeq 1/t.
    \end{align}
If $f, g \in S_0(\R^n)$,
    \begin{align*}
    \int_{\R^n} |f(\xi) g(\xi)| dx \leq \int_0^{\infty} f^{\star} (t) g^{\star} (t) dt.
    \end{align*}
Together, these two facts can be used to show the left-hand side of \eqref{A3.2} majorizes the left-hand side of \eqref{HL-F(f)}.

\begin{proof}[Proof of Theorem \ref{Lem-Rearr}] We will prove the following estimate for all $f \in H_A^p$:
        \begin{align}
        \label{A3.4}
        \left( \int_0^{\infty} t^{\epsilon p - 2} \left[ \int_0^t F_{\epsilon}^{\star} (u)^p du \right] dt \right)^{1/p} \leq C \| f \|_{H_A^p},
        \end{align}
which implies \eqref{A3.2}. Indeed, the rearrangement function is always decreasing for $0 < t < \infty$. Thus, $F_{\epsilon}^{\star}(t)^p \leq \frac{1}{t} \int_0^t F_{\epsilon}^{\star} (u)^p du$. Then,
    \begin{align*}
    \int_0^{\infty} t^{\epsilon p - 1} F_{\epsilon}^{\star} (t)^p dt
        &\leq \int_0^{\infty} t^{\epsilon p - 1} \left( \frac{1}{t} \int_0^t F_{\epsilon}^{\star} (u)^p du \right) dt = \int_0^{\infty} t^{\epsilon p - 2} \left( \int_0^t F_{\epsilon}^{\star} (u)^p du \right) dt.
    \end{align*}
We first prove \eqref{A3.4} for unit atoms. Using a dilation argument, we extend it to all atoms, and to any $f \in H_A^p$ using the atomic decomposition.

Let $f$ be a unit $(p, 2, s)$ atom, that is, an atom supported on $x_0 + B_0$. Without loss of generality, we set $x_0 = 0$.
On unit atoms, the estimates \eqref{PW-F(a)} and \eqref{PW-F(a)2} reduce to
        \begin{align*}
        \| \hat{f} \|_{\infty} \leq
            \begin{cases}
            \rho_{\ast}(\xi)^{(s + 1) \zeta_-}
                &\textrm{ for } \xi \in B_0^* \\
            \rho_{\ast}(\xi)^{\frac{1}{p} - 1}
                &\textrm{ for all } \xi.
            \end{cases}
        \end{align*}
This implies
    \begin{align*}
    F_{\epsilon} (\xi)
        &\leq
            \begin{cases}
            \rho_{\ast}(\xi)^{\zeta_- (s + 1)  - \lambda}
                &\textrm{ for } \xi \in B_0^*, \\
            \rho_{\ast}(\xi)^{\frac{1}{p} - 1 - \lambda}
                &\textrm{ for all } \xi,
            \end{cases}
    \end{align*}
where the first estimate has a positive power, and the second has a negative power. These give  $\| F_{\epsilon} \|_{\infty} \leq C$, and $F_{\epsilon} (\xi) \leq C \rho_{\ast} (\xi)^{-\lambda}$, which by the properties of the rearrangement function and \eqref{Rearr-rho}, imply
    \[ F_{\epsilon}^{\star} (t) \leq C \min\{ 1, t^{-\lambda} \}. \]
With these estimates,
    \begin{align*}
    \int_0^{\infty} t^{\epsilon p - 2} \left( \int_0^t F_{\epsilon}^{\star} (u)^p du \right) dt = \int_0^{1} + \int_1^{\infty} t^{\epsilon p - 2} \left( \int_0^t F_{\epsilon}^{\star} (u)^p du \right) dt = I_1 + I_2.
    \end{align*}
By the fact that $F_{\epsilon}^{\star} (t) \leq C$, we have $I_1 \leq C$. To estimate $I_2$,
    \begin{align*}
    I_2
        &\leq  \int_1^{\infty} t^{\epsilon p - 2} \left( \int_0^t F_{\epsilon}^{\star} (u)^p du \right) dt \leq  \int_1^{\infty} t^{\epsilon p - 2} \left( \int_0^t u^{-\lambda p} du \right) dt \\
        &\simeq \int_1^{\infty} t^{\epsilon p - 2} t^{1 - \lambda p} dt = \int_1^{\infty} t^{p - 2} dt \leq C.
    \end{align*}
Since $\| f \|_{H_A^p} \leq C$ for all atoms, we have \eqref{A3.4} for unit atoms.

We now extend it to all atoms using a dilation argument. Let $f$ be a general $(p, 2, s)$ atom supported on $B_{k}$. Then the dilated atom $f_k (x) = b^{k/p} f(A^k x)$ is an atom with the same $H_A^p$-norm, but supported on $B_0$, that is, $f_k$ is a unit atom. Denoting $G_{\epsilon} (\xi) = \rho_{\ast}(\xi)^{-\lambda} \ |\widehat{f_k} (\xi)|$, we have just shown that
    \[ \int_0^{\infty} t^{\epsilon p - 2} \left( \int_0^t G_{\epsilon}^{\star} (u)^p du \right) dt \leq C. \]
The fact that \eqref{A3.4} holds for all atoms follows if we can show that the above quantity is the same if we replace $G_{\epsilon}$ by $F_{\epsilon} (\xi) = \rho_{\ast} (\xi)^{-\lambda} |\hat{f}(\xi)|$.

As before, we denote $D_{A^{\ast}} g(x) = g(A^{\ast} x)$. Then
    \[ G_{\epsilon} (\xi) = b^{-\epsilon k} (D_{A^*}^k F_{\epsilon})(\xi). \]
The distribution function is affected as follows.
    \begin{align*}
    d_{G_{\epsilon}} (s)
        &= |\{ \xi : G_{\epsilon} (\xi) > s \}| = |\{ \xi : (D_{A^*}^{-k} F_{\epsilon})(\xi) > sb^{\epsilon k} \}| \\
        &= |\{ \xi : F_{\epsilon} ((A^{\ast})^{-k} \xi) > sb^{\epsilon k} \}| = b^k |\{ u : F_{\epsilon} (u) > sb^{\epsilon k} \}| = b^k d_{F_{\epsilon}} (sb^{\epsilon k}).
        \end{align*}
This affects the rearrangement function as follows:
\begin{equation}\label{G-ep}
\begin{aligned}
    G_{\epsilon}^{\star} (t)
        &= \inf \{ s : d_{G_{\epsilon}} (s) \leq t \} = \inf \{ s : d_{F_{\epsilon}} (sb^{\epsilon k}) \leq b^{-k} t \} \\
        &= b^{-\epsilon k} \inf \{ r : d_{F_{\epsilon}} (r) \leq b^{-k} t \} = b^{-\epsilon k} F_{\epsilon}^{\star} (b^{-k} t).
\end{aligned}
\end{equation}
By two changes of variables and \eqref{G-ep}, we have
    \begin{align*}
    \int_0^{\infty} t^{\epsilon p - 2} \left( \int_0^t F_{\epsilon}^{\star} (u)^p du \right) dt
        &= \int_0^{\infty} t^{\epsilon p - 2} \left( \int_0^t b^{p \epsilon k} G_{\epsilon}^{\star} (b^{k} u)^p du \right) dt \\
        &= \int_0^{\infty} s^{\epsilon p - 2} \left( \int_0^s G_{\epsilon}^{\star} (r)^p dr \right) ds \leq C.
    \end{align*}
This extends \eqref{A3.4} to all atoms, and we now extend it to all $f \in H_A^p$.
 If $f \in H_A^p$, then we have the atomic decomposition
    \[ f = \sum_{j} \lambda_j a_j, \]
with $(p, 2, s)$ atoms $a_j$ and $(\lambda_j) \in \ell^p$. Taking the Fourier transform, we have the following sum in the distributional and pointwise sense:
    \[ \hat{f}(\xi) = \sum_j \lambda_j \widehat{a_j}(\xi). \]
With $F_{\epsilon} (\xi) = |\xi|^{-\lambda} |\hat{f}(\xi)|$ and $p \in (0, 1)$,
    \[ F_{\epsilon} (\xi)^p = \left( |\xi|^{-\lambda} | \sum_j \lambda_j \widehat{a_j} (\xi)| \right)^p \leq \sum_j |\lambda_j|^p \cdot \left( |\xi|^{-\lambda} |\widehat{a_j}(\xi)| \right)^p = \sum_j |\lambda_j|^p A_j (\xi)^p , \]
where $A_j (\xi) = |\xi|^{-\lambda} |\widehat{a_j} (\xi)|$. Recall that the rearrangement operation is order-preserving ($f \leq g \Rightarrow f^{\star} \leq g^{\star})$. By \eqref{Rear-com} and \eqref{Rear-Subl}, we have
    \begin{align*}
    \int_0^t F_{\epsilon}^{\star} (u)^p du
        &\leq \int_0^t \left( \sum_j |\lambda_j|^p A_j^p (\cdot) \right)^{\star} (u) du \leq \sum_j |\lambda_j|^p \int_0^t A_j^{\star} (u)^p du.
    \end{align*}
Therefore,
    \begin{align*}
    \int_0^{\infty} t^{\epsilon p - 2} \left[ \int_0^t F_{\epsilon}^{\star} (u)^p du \right] dt
        &\leq \int_0^{\infty} t^{\epsilon p - 2} \left[ \sum_j |\lambda_j|^p \int_0^t A_j^{\star} (u)^p du \right] dt \\
        &= \sum_j |\lambda_j|^p \int_0^{\infty} t^{\epsilon p - 2} \left( \int_0^t A_j^{\star} (u)^p du \right) dt \leq C \sum_j |\lambda_j|^p,
    \end{align*}
where the last inequality comes from $\eqref{A3.4}$ for all atoms. Taking the infimum over all possible atomic decompositions, we obtain \eqref{A3.4} for all $f \in H_A^p$.
\end{proof}

\end{document}